\newcommand{\w}{\omega}
\newcommand{\1}{\mathds 1  }
\newcommand{\p}{\partial}
\newcommand{\pH}{port-Hamiltonian}
\newcommand{\G}{\mathcal{G} }
\newcommand{\V}{\mathcal{V} }
\newcommand{\E}{\mathcal{E} }
\newtheorem{mythm}{Theorem}
\newtheorem{myprop}{Proposition}
\newtheorem{myrem}{Remark}
\newtheorem{myass}{Assumption}
\DeclareMathOperator{\re}{Re}
\DeclareMathOperator{\diag}{diag}
\DeclareMathOperator{\blockdiag}{blockdiag}
\title{\LARGE \bf
Optimal power dispatch in networks of high-dimensional models of synchronous machines
}
\author{Tjerk Stegink and Claudio De Persis and Arjan van der Schaft
\thanks{This work is supported by the NWO (Netherlands Organisation for Scientific Research) programme  \emph{Uncertainty Reduction in Smart Energy Systems (URSES)} under the auspices of the project ENBARK.}
\thanks{T.W. Stegink and C. De Persis are with the Engineering and Technology institute Groningen (ENTEG),
        University of Groningen, the Netherlands.
        {\tt\small \{t.w.stegink, c.de.persis\}@rug.nl}}
\thanks{A.J. van der Schaft is with the Johann Bernoulli Institute for Mathematics and Computer Science, University of Groningen, Nijenborgh 9, 9747 AG Groningen,
the Netherlands.
        {\tt\small a.j.van.der.schaft@rug.nl}}%
}
\begin{document}

\maketitle
\thispagestyle{empty}
\pagestyle{empty}

\begin{abstract}
This paper investigates the problem of optimal frequency regulation of multi-machine power networks where each synchronous machine is described by a sixth order model. By analyzing the physical energy stored in the network and the generators, a port-Hamiltonian representation of the multi-machine system is obtained. 
Moreover, it is shown that the open-loop system is passive  with respect to its steady states which implies that passive controllers can be used to control the multi-machine network. As a special case, a distributed consensus based controller is designed that regulates the frequency and minimizes a global quadratic generation cost in the presence of a constant unknown demand. In addition, the proposed controller allows freedom in choosing any desired connected undirected  weighted communication graph. 
\end{abstract}


\section{INTRODUCTION}

The control of power networks has become increasingly challenging over the last decades. As renewable energy sources penetrate the grid, the conventional power plants have more difficulty in keeping the frequency around the nominal value, e.g. 50 Hz, leading to an increased chance of a network failure of even a blackout.  

The current developments require that more advanced models for the power network must be established as the grid is operating more often near its capacity constraints. Considering high-order models of, for example, synchronous machines, that better approximate the reality allows us to establish results on the control and stability of power networks that are more reliable and accurate.

At the same time, incorporating economic considerations in the power grid has become more difficult. As the scale of the grid expands, computing the optimal power production allocation in a centralized manner as conventionally is done is computationally expensive, making distributed control far more desirable compared to centralized control. In addition, often exact knowledge of the power demand is required for computing the optimal power dispatch, which is unrealistic in practical applications. As a result, there is an increased desire for distributed real-time controllers which are able to compensate for the uncertainty of the  demand.

In this paper, we propose an energy-based approach for the modeling, analysis and control of the power grid, both for the physical network as well as for the distributed controller design. Since energy is the main quantity of interest, the port-Hamiltonian framework is a natural approach to deal with the problem. Moreover, the port-Hamiltonian framework lends itself to deal with complex large-scale nonlinear systems like power networks \cite{EJCFiaz},  \cite{stegink2015port}, \cite{LHMNLC}.

The emphasis in the present paper lies on the modeling and control of (networked) synchronous machines as they play an important role in the power network since they are the most flexible and have to compensate for the increased fluctuation of power supply and demand.  However, the full-order model of the synchronous machine as derived in many power engineering books like \cite{anderson1977}, \cite{kundur}, \cite{powsysdynwiley} is difficult to analyze, see e.g. \cite{EJCFiaz} for a \pH\ approach, especially when considering multi-machine networks \cite{cal-tab}, \cite{ortega2005transient}. Moreover,  it is not necessary to consider the full-order model when studying electromechanical dynamics \cite{powsysdynwiley}. 

On the other hand of the spectrum, many of the recent optimal controllers in power grids that deal with optimal power dispatch problems rely on the second-order (non)linear swing equations as the model for the power network \cite{AGC_ACC2014,you2014reverse,zhangpapaautomatica,zhao2015distributedAC}, or the third-order model as e.g. in \cite{trip2016internal}. However, the swing equations are inaccurate and only valid on a specific time scale up to the order of a few seconds so that asymptotic stability results are often invalid for the actual system  \cite{anderson1977}, \cite{kundur}, \cite{powsysdynwiley}. 

Hence, it is appropriate to make simplifying assumptions for the full-order model and to focus on multi-machine models with intermediate complexity which provide a more accurate description of the network compared to the second- and third-order models 
\cite{anderson1977,kundur,powsysdynwiley}. However, for the resulting intermediate-order multi-machine models the stability analysis is often carried out for the linearized system, see   \cite{alv_meng_power_coupl_market,kundur,powsysdynwiley}. Consequently, the stability results are only valid around a specific operating point. 

Our approach is different as the nonlinear nature of the power network is preserved. More specifically, in this paper we consider a nonlinear sixth-order reduced model of the synchronous machine that enables a quite accurate description of the power network while allowing us to perform a rigorous analysis. 

In particular, we show that the \pH\ framework is very convenient when representing the dynamics of the multi-machine network and for the stability analysis. Based on the physical energy stored in the generators and the transmission lines,  a port-Hamiltonian representation of the multi-machine power network can be derived. More specifically, while the system dynamics is complex, the interconnection and damping structure of the \pH\ system is sparse and, importantly, state-independent. 

The latter property implies shifted passivity of the system \cite{phsurvey} which respect to its steady states which allows the usage of passive controllers that steer the system to a desired steady state. As a specific case, we design a distributed real-time controller that regulates the frequency and minimizes the global generation cost without requiring any information about the unknown demand. In addition, the proposed controller design allows us to choose any desired  undirected weighted communication graph as long as the underlying topology is connected. 

The main contribution of this paper is to combine distributed optimal frequency controllers with a high-order nonlinear model  of the power network, which is much more accurate compared to the existing literature, to  prove asymptotic stability to the set of optimal points by using  Lyapunov function based techniques. 





The rest of the paper is organized as follows. In Section \ref{sec:preliminaries} the preliminaries are stated and a sixth order model of a single synchronous machine is given. Next, the multi-machine model is derived in Section \ref{sec:network-equations}. Then the energy functions of the system are derived in Section \ref{sec:energy-analysis}, which are used to represent the multi-machine system in \pH\ form, see Section \ref{sec:port-hamilt-repr}. In Section \ref{sec:minim-gener-costs} the design of the distributed controller is given and asymptotic stability to the set of optimal points is proven. Finally, the conclusions and possibilities for future research are discussed in Section \ref{sec:conclusions}.

\section{PRELIMINARIES}
\label{sec:preliminaries}
Consider a power grid consisting of $ n $ buses. The network is represented by a connected and undirected graph $ \G = (\V, \E) $, where the set of nodes, $ \V = \{1, . . . , n\} $, is the set of buses and the set of edges, $ \E= \{1, . . . , m\} \subset \V \times \V  $, is the set of transmission lines connecting the buses.  The ends of edge $ l \in\E$ are arbitrary labeled with a `+' and a `-', so that the incidence matrix $D$ of the network is given by 
\begin{align}\label{eq:incmatrix}
D_{il}=\begin{cases}
+1 &\text{if $i$ is the positive end of $l$}\\
-1 &\text{if $i$ is the negative end of $l$}\\
0 & \text{otherwise.}
\end{cases}
\end{align}
Each bus represents a synchronous machine and is assumed to
have controllable mechanical power injection and a constant \emph{unknown} power load. 
The dynamics of each synchronous machine  $i\in\V$ is assumed to be  given by \cite{powsysdynwiley} 
\begin{equation}
\begin{aligned}\label{eq:SG6order}
  M_i \dot  \w_i&=P_{mi}-P_{di}-V_{di}I_{di}-V_{qi}I_{qi}\\
\dot \delta_i&= \w_i \\
T_{di}'\dot E_{qi}'&=E_{fi}-E_{qi}'+(X_{di}-X_{di}')I_{di}\\
T_{qi}'\dot E_{di}'&=-E_{di}'-(X_{qi}-X_{qi}')I_{qi} \\
T_{di}''\dot E_{qi}''&=E_{qi}'-E_{qi}''+(X_{di}'-X_{di}'')I_{di} \\
T_{qi}''\dot E_{di}''&=E_{di}'-E_{di}''-(X_{qi}'-X_{qi}'')I_{qi},
\end{aligned} 
\end{equation}
see also Table \ref{tab:varpar}. 
\begin{table}
  \centering
  \begin{tabular}{ll}
    $\delta_i$& rotor angle w.r.t. synchronous reference frame\\
    $\w_i$& frequency deviation\\
    $P_{mi}$& mechanical power injection\\
    $P_{di}$ &  power demand \\
    $M_i$ & moment of inertia\\
    $X_{qi},X_{di}$ & synchronous reactances\\
    $X_{qi}',X_{di}'$ & transient reactances\\
    $X_{di}'',X_{qi}''$ & subtransient reactances\\
    $E_{fi}$ & exciter emf/voltage\\
    $E_{qi}',E_{di}'$ & internal bus transient emfs/voltages\\
    $E_{qi}'',E_{di}''$ & internal bus subtransient emfs/voltages\\
    $V_{qi},E_{di}$ & external bus voltages\\
    $I_{qi},I_{di}$ & generator currents\\
    $T_{qi}',T_{di}'$ & open-loop transient time-scales\\    
    $T_{qi}'',T_{di}''$ & open-loop subtransient time-scales\\
  \end{tabular}
  \caption{Model parameters and variables.}
\label{tab:varpar}
\end{table}
\begin{myass} \label{ass:main}
When using model \eqref{eq:SG6order}, we make the following simplifying assumptions \cite{powsysdynwiley}:  \begin{itemize}
  \item The frequency of each machine is operating around the synchronous frequency.  
  \item The stator winding resistances are zero. 
  \item The excitation voltage $E_{fi}$ is constant for all $i\in\V$.
  \item The \emph{subtransient saliency} is negligible, i.e. $X_{di}''=X_{qi}'', \forall i\in \V$.
  \end{itemize}
The latter assumption is valid for synchronous machines with damper windings in both the $d$ and $q$ axes, which is the case for most synchronous machines \cite{powsysdynwiley}.  
\end{myass}

It is standard in the power system literature to represent the equivalent synchronous machine circuits along the $dq$-axes as in Figure 1,  \cite{kundur}, \cite{powsysdynwiley}. 
Here we use the conventional phasor notation $\overline E_i''= \overline E_{qi}''+\overline E_{di}''= E_{qi}''+jE_{di}''$ where $\overline E_{qi}'':=E_{qi}'', \overline E_{di}'':=jE_{di}''$, and the  phasors $\overline I_i,\overline V_i$ are defined likewise \cite{powsysdynwiley}, \cite{schiffer2015modeling}. Remark that internal voltages $E_{q}',E_d',E_q'',E_d''$ as depicted in Figure 1 are not necessarily at steady state but are  governed by \eqref{eq:SG6order}, where it should be noted that, by definition, the reactances of a round rotor synchronous machine satisfy $X_{di}>X_{di}'>X_{di}''>0, X_{qi}> X_{qi}'>X_{qi}''>0$ for all $i\in \V$ \cite{kundur}, \cite{powsysdynwiley}.
\begin{figure}
\includegraphics{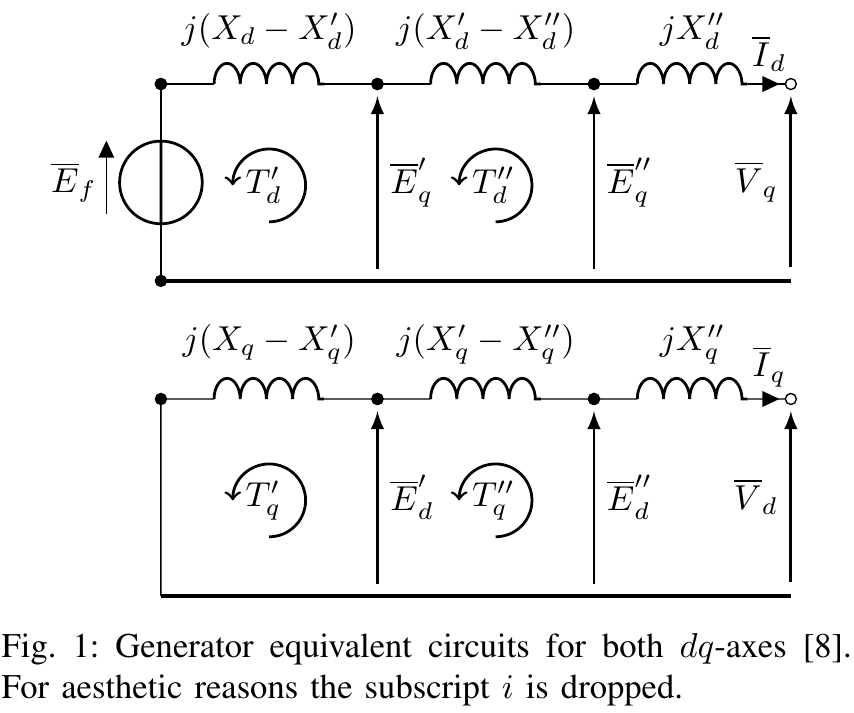}
\label{fig:gencirc}
\end{figure}

By Assumption \ref{ass:main}  the stator winding resistances are negligible so that synchronous machine $i$ can be represented by a subtransient emf behind a subtransient reactance, see Figure 2  \cite{kundur}, \cite{powsysdynwiley}. 
\begin{figure}
\includegraphics{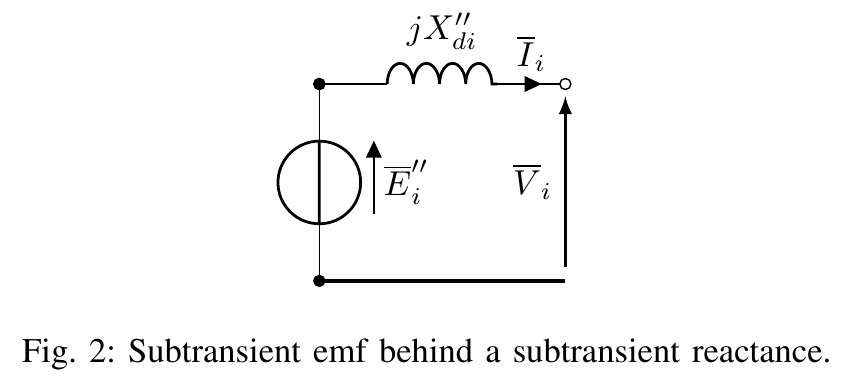}
\label{fig:emfsubtran}
\end{figure}
 As illustrated in this figure, the internal and external voltages are related to each other by \cite{powsysdynwiley}
	\begin{align} \label{eq:EVrelation}
\overline E_i''&=\overline V_i+jX_{di}''\overline I_i, \qquad i\in\V.
	\end{align}

\section{MULTI-MACHINE MODEL}
\label{sec:network-equations}

Consider $n$ synchronous machines which are interconnected by  $RL$-transmission lines and assume that the network is operating at steady state. As the currents and voltages of each synchronous machine is expressed w.r.t. its local $dq$-reference frame, the network equations are written as \cite{schiffer2015modeling}
\begin{align}\label{eq:IEnetworkeq}
  \overline I=\diag(e^{-j\delta_i})\mathcal Y\diag(e^{j\delta_i})\overline E''.
\end{align}
Here the admittance matrix\footnote{Recall that $D$ is the incidence matrix of the network defined by \eqref{eq:incmatrix}.} $\mathcal Y:=D(R+jX)^{-1}D^T$
satisfies  $\mathcal Y_{ik}=-G_{ik}-jB_{ik}$ and $\mathcal Y_{ii}=G_{ii}+j B_{ii}=\sum_{k\in\mathcal N_i}G_{ik}+j\sum_{k\in\mathcal N_i}B_{ik}$ where $G$ denotes the conductance and $B\in\mathbb R_{\leq 0}^{n\times n}$ denotes the susceptance of the network  \cite{schiffer2015modeling}.  In addition, $\mathcal N_i$ denotes the set of neighbors of node $i$.
\begin{myrem}\label{rem:partnetwork}
  As the electrical circuit depicted in Figure 2 
  is in steady state  \eqref{eq:EVrelation}, the reactance
  $X_{di}''$ can also be considered as part of the network (an
  additional inductive line) 
  and is therefore  implicitly  included into the network admittance
  matrix $\mathcal Y$, see also Figure 3. 
\end{myrem}
\begin{figure}
\includegraphics{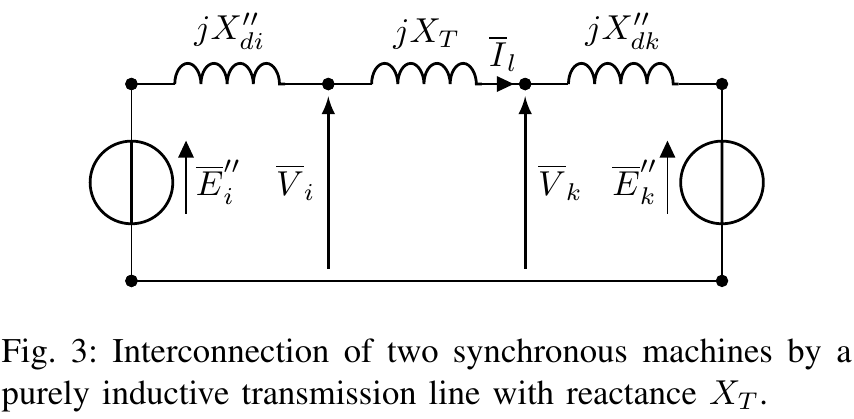}
\label{fig:indline}
\end{figure}

To simplify the analysis further, we assume that the network resistances are negligible so that $G=0$. 
%
%
By equating the real and imaginary part of \eqref{eq:IEnetworkeq} we obtain the following expressions for the $dq$-currents entering generator $i\in \V$:
\begin{equation}
\begin{aligned}\label{eq:currentsmulti}
  I_{di}&=B_{ii}E_{qi}''-\sum_{k\in\mathcal N_i}\left[B_{ik}(E_{dk}''\sin \delta_{ik}+E_{qk}''\cos \delta_{ik})   \right],\\
I_{qi}&=-B_{ii}E_{di}''-\sum_{k\in\mathcal N_i}\left[B_{ik}(E_{qk}''\sin \delta_{ik}-E_{dk}''\cos \delta_{ik})\right],
\end{aligned}
\end{equation}
where $\delta_{ik}:=\delta_i-\delta_k$. By substituting  \eqref{eq:currentsmulti} and \eqref{eq:EVrelation} into \eqref{eq:SG6order}  we obtain after some rewriting a sixth-order multi-machine  model given by equation \eqref{eq:multimach5}, illustrated at the top of the next page.
\begin{figure*}
\begin{equation}
  \begin{aligned}
    M_i\Delta \dot \w_i&=P_{mi}-P_{di}+\sum_{k\in\mathcal N_i}B_{ik}\Big[(E_{di}''E_{dk}'' +E_{qi}''E_{qk}'')\sin \delta_{ik}+(E_{di}''E_{qk}''-E_{qi}''E_{dk}'')\cos \delta_{ik}\Big]\\
    \dot \delta_i&=\Delta \w_i \\
    T_{di}'\dot E_{qi}'&=E_{fi}-E_{qi}'+(X_{di}-X_{di}')(B_{ii}E_{qi}''-\sum_{k\in\mathcal N_i}\left[B_{ik}(E_{dk}''\sin \delta_{ik}+E_{qk}''\cos \delta_{ik})   \right])\\
    T_{qi}'\dot E_{di}'&=-E_{di}'+(X_{qi}-X_{qi}')(B_{ii}E_{di}''-\sum_{k\in\mathcal N_i}\left[B_{ik}(E_{dk}''\cos \delta_{ik}-E_{qk}''\sin \delta_{ik})\right])\\
    T_{di}''\dot E_{qi}''&=E_{qi}'-E_{qi}''+(X_{di}'-X_{di}'')(B_{ii}E_{qi}''-\sum_{k\in\mathcal N_i}\left[B_{ik}(E_{dk}''\sin \delta_{ik}+E_{qk}''\cos \delta_{ik})   \right])\\
    T_{qi}''\dot
    E_{di}''&=E_{di}'-E_{di}''+(X_{qi}'-X_{qi}'')(B_{ii}E_{di}''-\sum_{k\in\mathcal
      N_i}\left[B_{ik}(E_{dk}''\cos \delta_{ik}-E_{qk}''\sin
      \delta_{ik})\right])
  \end{aligned}\label{eq:multimach5}
\end{equation}
\end{figure*}
\begin{myrem}\label{rem:energyconservation}
 Since the transmission lines are purely inductive by assumption,  there are no energy losses in the transmission lines implying that the following energy conservation law holds: $\sum_{i\in\V}P_{ei}=0$ where    $P_{ei}=\re(\overline E_i\overline I_i^*)=E_{di}''I_{di}+E_{qi}''I_{qi}$ is the electrical power produced by synchronous machine $i$.
\end{myrem}

\section{ENERGY FUNCTIONS}
\label{sec:energy-analysis}

When analyzing the stability of the multi-machine system one often searches for a suitable Lyapunov function. A natural starting point is to consider the physical energy as a candidate Lyapunov function. Moreover, when we have an expression for the energy, a port-Hamiltonian representation  of the associated  multi-machine model \eqref{eq:multimach5} can be derived, see Section \ref{sec:port-hamilt-repr}. 

\begin{myrem}\label{rem:scale_ws}
  It is convenient in the definition of the Hamiltonian to multiply the energy stored in the synchronous machine and the transmission lines by the synchronous frequency $\w_s$ since a factor $\w_s^{-1}$ appears in each of the energy functions. As a result, the Hamiltonian has the dimension of power instead of energy. Nevertheless, we still refer to the Hamiltonian as the energy function in the sequel. 
\end{myrem}

In the remainder of this section we will first identify the electrical and mechanical energy stored in each synchronous machine. Next, we identify the   energy stored in the transmission lines.

\subsection{Synchronous Machine}

\subsubsection{Electrical Energy}
Note that, at steady state, the energy (see Remark \ref{rem:scale_ws}) stored in the first two reactances\footnote{In both the $d$- and the $q$-axes.} of generator $i$  as illustrated in Figure 1 is given by  
\begin{equation}
\begin{aligned}\label{eq:ensubtransq}
H_{edi}  &=\frac1{2}\left(\frac{(E_{qi}'-E_{fi})^2}{X_{di}-X_{di}'}+\frac{(E_{qi}'-E_{qi}'')^2}{X_{di}'-X_{di}''} \right)\\
H_{eqi}  &=\frac1{2}\left(\frac{(E_{di}')^2}{X_{qi}-X_{qi}'}+\frac{(E_{di}'-E_{di}'')^2}{X_{qi}'-X_{qi}''} \right).
\end{aligned}
\end{equation}
\begin{myrem}
	The energy stored in the third (subtransient) reactance will be considered as part of the energy stored in the transmission lines, see also Remark \ref{rem:partnetwork} and Section \ref{sec:transmission-lines}. 
\end{myrem}

\subsubsection{Mechanical Energy}
The kinetic energy of synchronous machine $i$ is given by
\begin{align*}
H_{mi}=\frac1{2}M_i\w_i^2=\frac12M_i^{-1}p_i^2,
\end{align*}
where $p_i=M_i\w_i$ is the angular momentum of synchronous machine $i$ with respect to the synchronous rotating reference frame.

\subsection{Inductive Transmission Lines}
\label{sec:transmission-lines}
Consider an interconnection between two SG's with a purely inductive transmission line (with reactance $X_T$) at steady state, see  Figure 3. When expressed in the local $dq$-reference frame of generator $i$, we observe  from  Figure 3 that at steady state one obtains\footnote{The mapping from $dq$-reference frame $k$ to $dq$-reference frame $i$ in the phasor domain is done by multiplication of $e^{-j\delta_{ik}}$ \cite{schiffer2015modeling}.}
  \begin{align}\label{eq:IVindline}
    jX_{l}\overline I_{l}=\overline{E}_i''-e^{-j\delta_{ik}}\overline{E}_k'',
  \end{align}
where the total reactance between the internal buses of generator $i$ and $k$ is given by $X_l:=X_{di}''+X_T+X_{dk}''$. Note that at steady state the modified energy of the inductive transmission line $l$ between nodes $i$ and $k$ is given by $H_{l}=\frac1{2}X_{l}\overline I_{l}^*\overline I_{l},$ which by \eqref{eq:IVindline} can be rewritten as 
\begin{equation}\label{eq:EnIndTransLines}
\begin{aligned}
  H_{l}=-\frac1{2}B_{ik}\Big(&2  \left(E_{di}'' E_{qk}''-E_{dk}'' E_{qi}''\right)\sin \delta _{ik}\\-&2  \left(E_{di}'' E_{dk}''+E_{qi}'' E_{qk}''\right)\cos \delta _{ik}\\
+&E_{di}''^2+E_{dk}''^2+E_{qi}''^2+E_{qk}''^2\Big),
\end{aligned}
\end{equation}
where the line susceptance satisfies $B_{ik}=-\frac1{X_{l}}<0$  \cite{schiffer2015modeling}.

\subsection{Total Energy}
The total physical energy of the multi-machine system is equal to the sum of the individual energy functions:
\begin{align}\label{eq:Hp}
  H_p=\sum_{i\in\V}\left(H_{dei}+H_{qei}+H_{mi}\right)+\sum_{l\in \E}H_{l}.
\end{align}

\section{PORT-HAMILTONIAN REPRESENTATION}
\label{sec:port-hamilt-repr}
Using the energy functions from the previous section, the multi-machine model \eqref{eq:multimach5} can be put into a \pH\ form. To this end, we derive expressions for the gradient of each energy function. \subsection{Transmission Line Energy}
Recall that the energy stored in transmission line  $l$  between internal buses $i$ and $k$ is given by \eqref{eq:EnIndTransLines}. It can be verified that the gradient of the \emph{total} energy $H_L:=\sum_{l\in\E}H_l$ stored in the transmission lines takes the form
\begin{align*}
  \begin{bmatrix}
    \frac{\p H_{L}}{\p \delta_i}\\
    \frac{\p H_{L}}{\p E_{qi}''}\\
    \frac{\p H_{L}}{\p E_{di}''}
  \end{bmatrix}&=                 
  \begin{bmatrix}
  E_{di}''I_{di}+E_{qi}''I_{qi}\\
  I_{di}\\
  I_{qi}
  \end{bmatrix}=
                 \begin{bmatrix}
                   P_{ei}\\
                   I_{di}\\
                   I_{qi}
                 \end{bmatrix},
\end{align*}
where  $I_{di},I_{qi}$ are given by \eqref{eq:currentsmulti}. Here it is used that the self-susceptances satisfy $B_{ii}=\sum_{k\in\mathcal N_i}B_{ik}$ for all $i\in \V$. 

\subsubsection{State transformation}
In the sequel, it is more convenient to consider a different set of variable describing the voltage angle differences. Define for each edge $l\in\E$  $\eta_l:=\delta_{ik}$ where  $i, k$ are respectively the positive and negative ends of $l$. In vector form we obtain $\eta=D^T\delta\in\mathbb R^m$, and observe that this implies
\begin{align*}
 D \frac{\p H_p}{\p \eta}= D \frac{\p H_L}{\p \eta}=P_e.
\end{align*}

\subsection{Electrical Energy SG}
Further, notice that the electrical energy stored in the equivalent circuits along the $d$- and $q$-axis of generator $i$  is given by \eqref{eq:ensubtransq} and satisfies
\begin{align*}
  \begin{bmatrix}
    X_{di}-X_{di}'& X_{di}-X_{di}'\\
    0& X_{di}'-X_{di}''
  \end{bmatrix}
  \begin{bmatrix}
    \frac{\p H_{edi}}{\p E_{qi}'}\\
    \frac{\p H_{edi}}{\p E_{qi}''}
  \end{bmatrix}&=
                 \begin{bmatrix}
                   E_{qi}'-E_{fi}\\
                   E_{qi}''-E_{qi}'
                 \end{bmatrix}\\
\begin{bmatrix}
    X_{qi}-X_{qi}'& X_{qi}-X_{qi}'\\
    0& X_{qi}'-X_{qi}''
  \end{bmatrix}
  \begin{bmatrix}
    \frac{\p H_{eqi}}{\p E_{di}'}\\
    \frac{\p H_{eqi}}{\p E_{di}''}
  \end{bmatrix}&=
                 \begin{bmatrix}
                   E_{di}'\\
                   E_{di}''-E_{di}'
                 \end{bmatrix}.
\end{align*}
By the previous observations, and by aggregating the states, the dynamics of the multi-machine system can now be written in the form \eqref{eq:6multiphall2} where the Hamiltonian is given by \eqref{eq:Hp} and  $\hat X_{di}:=X_{di}-X_{di}', \hat X_{di}':=X_{di}'-X_{di}'', \hat X_d=\diag_{i\in\V}\{\hat X_{di}\}$ and  $\hat X_d',\hat X_{q},\hat X_q'$ are defined likewise. In addition, $T_d'=\diag_{i\in\V}\{T_{di}'\}$ and $T_d',T_q,T_q'$ are defined similarly. 
\begin{figure*}[]
\begin{equation}
\begin{aligned}
\dot x_p&=
\begin{bmatrix}
  \dot p\\
\dot   \eta\\
\dot   E_{q}'\\
\dot   E_{d}'\\
\dot E_{q}''\\
\dot E_{d}''
\end{bmatrix}=
  \begin{bmatrix}
    0&-D&0&0&0&0\\
    D^T&0&0&0&0&0\\
    0&0&-(T_{d}')^{-1}\hat X_{d}&0&-(T_{d}')^{-1}\hat X_{d}&0\\
    0&0&0&-(T_{q}')^{-1}\hat X_{q}&0&-(T_{q}')^{-1}\hat X_{q}\\
    0&0&0&0&-(T_{d}'')^{-1}\hat X_{d}'&0\\
    0&0&0&0&0&-(T_{q}'')^{-1}\hat X_{q}'
  \end{bmatrix}
\nabla H_p          +
g(P_{m}-P_d),\\
y&=g^T
\nabla H_p, \qquad g=  \begin{bmatrix}
     I&0&0&0&0&0
   \end{bmatrix}^T.
\end{aligned}\label{eq:6multiphall2}
\end{equation}
\end{figure*}
Observe that the multi-machine system (\ref{eq:6multiphall2}) is of the form 
\begin{equation}
\begin{aligned}
  \dot x&=(J-R)\nabla H(x)+g u\\
  y&=g^T\nabla H(x)
\end{aligned}\label{eq:consJRpHsys}
\end{equation}
where $J=-J^T$, $ R=R^T$ are respectively the anti-symmetric and symmetric part of the matrix depicted in \eqref{eq:6multiphall2}. Notice that the dissipation matrix of the \emph{electrical part} is positive definite (which implies $R\geq0$) if 
\begin{align*}
\begin{bmatrix}
  2\frac{X_{di}-X_{di}'}{T_{di}'}&\frac{X_{di}-X_{di}'}{T_{di}'}\\
\frac{X_{di}-X_{di}'}{T_{di}'}&2\frac{X_{di}'-X_{di}''}{T_{di}''}
\end{bmatrix}>0, \qquad \forall i\in\V,
\end{align*}
which, by invoking the Schur complement, holds if and only if 
\begin{align}\label{eq:neccT}
  4(X_{di}'-X_{di}'')T_{di}'-(X_{di}-X_{di}')T_{di}''>0,\quad  \forall i\in\V.
\end{align}
 Note that a similar condition holds for the $q$-axis. 
\begin{myprop}
  Suppose that for all $i\in\V$ the following holds:
  \begin{equation}
\begin{aligned}\label{eq:neccTdq}
  4(X_{di}'-X_{di}'')T_{di}'-(X_{di}-X_{di}')T_{di}''&>0\\
  4(X_{qi}'-X_{qi}'')T_{qi}'-(X_{qi}-X_{qi}')T_{qi}''&>0.
\end{aligned}
\end{equation}
Then \eqref{eq:6multiphall2} is a port-Hamiltonian representation of the  multi-machine network \eqref{eq:multimach5}. 
\end{myprop}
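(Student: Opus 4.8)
The plan is to check the two defining features of a port-Hamiltonian system separately: (i) that expanding the right-hand side of \eqref{eq:6multiphall2} with the Hamiltonian \eqref{eq:Hp} reproduces \eqref{eq:multimach5} line by line, and (ii) that the constant matrix in \eqref{eq:6multiphall2} splits as $J-R$ with $J=-J^T$ and, under \eqref{eq:neccTdq}, $R=R^T\geq0$. Property (i) holds irrespective of \eqref{eq:neccTdq}; the hypotheses \eqref{eq:neccTdq} are needed only to certify $R\geq0$.

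For (i) I would assemble $\nabla H_p$ from the block computations already recorded above. The mechanical part gives $\nabla_p H_p=M^{-1}p=\w$, so the second block-row returns $\dot\eta=D^T\w$, matching $\eta=D^T\delta$ with $\dot\delta=\w$. The first block-row is $\dot p=-D\nabla_\eta H_p+(P_m-P_d)$; inserting $D\nabla_\eta H_p=P_e$ and $P_{ei}=E_{di}''I_{di}+E_{qi}''I_{qi}$ and then substituting the currents \eqref{eq:currentsmulti} shows that $-P_{ei}$ equals the power-flow summation in the first line of \eqref{eq:multimach5}, recovering the swing equation.

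The electrical equations are the crux, and I expect the bookkeeping here — not any conceptual step — to be the main obstacle, because $E'$ and $E''$ are coupled both through the off-diagonal $\hat X$ entries of \eqref{eq:6multiphall2} and through the cross-terms of \eqref{eq:ensubtransq}. I would process each axis and generator as a $2\times2$ block. Factoring the $(E_{qi}',E_{qi}'')$ sub-block of the system matrix as $-\diag\big((T_{di}')^{-1},(T_{di}'')^{-1}\big)A_{di}$ with $A_{di}=\begin{bmatrix}\hat X_{di}&\hat X_{di}\\0&\hat X_{di}'\end{bmatrix}$, the point is that $A_{di}$ is exactly the matrix in the gradient identity for $H_{edi}$. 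Thus $A_{di}$ applied to $\nabla_{(E_{qi}',E_{qi}'')}H_{edi}$ collapses to the voltage differences $\col(E_{qi}'-E_{fi},\,E_{qi}''-E_{qi}')$, while $A_{di}$ applied to the line contribution $\nabla_{(E_{qi}',E_{qi}'')}H_L=\col(0,\partial H_L/\partial E_{qi}'')$ supplies the current terms. After carefully fixing the sign of $\partial H_L/\partial E_{qi}''$ (and of $\partial H_L/\partial E_{di}''$ on the $q$-axis) and premultiplying by the time-constant diagonal, the third through sixth lines of \eqref{eq:multimach5} appear verbatim; the $q$-axis is handled identically.

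For (ii) I would separate the constant matrix into its symmetric and antisymmetric parts. The upper-left block $\begin{bmatrix}0&-D\\D^T&0\end{bmatrix}$ is antisymmetric and hence lies entirely in $J$, contributing nothing to $R$. The electrical block contributes to $R$ the negative of its symmetric part, which is block-diagonal across generators and axes; per generator the governing symmetric $2\times2$ matrix is $\tfrac12$ of the one displayed just before the statement. By the Schur complement its positive (semi)definiteness is equivalent to $4(X_{di}'-X_{di}'')T_{di}'-(X_{di}-X_{di}')T_{di}''>0$ together with its $q$-axis analogue, which are precisely \eqref{eq:neccTdq}. Hence $R=R^T\geq0$, and with $J=-J^T$ this confirms that \eqref{eq:6multiphall2} is a genuine port-Hamiltonian representation of \eqref{eq:multimach5}.
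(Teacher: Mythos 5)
Your proposal is correct and follows essentially the same route as the paper, whose justification for this proposition is precisely the preceding derivation: the gradient identities for $H_L$, $H_{edi}$, $H_{eqi}$ and $H_{mi}$ establish that the right-hand side of \eqref{eq:6multiphall2} reproduces \eqref{eq:multimach5}, and the Schur-complement computation on the symmetric part of the electrical block shows that \eqref{eq:neccTdq} is exactly what is needed for $R=R^T\geq 0$. Your only addition is to carry out the line-by-line verification more explicitly, which is consistent with (and implicit in) the paper's argument.
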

It should be stressed that \eqref{eq:neccTdq} is not a restrictive assumption as it holds for a typical generator since $T_{di}''\ll T_{di}', T_{qi}''\ll T_{qi}'$, see also Table 4.2 of \cite{kundur} and Table 4.3 of \cite{powsysdynwiley}.

Because the interconnection and damping structure $J-R$ of \eqref{eq:6multiphall2} is state-independent, the shifted Hamiltonian 
\begin{align}
\bar H(x)=H(x)-(x-\bar x)^T\nabla H(\bar x)-H(\bar x)\label{eq:shiftedH}
\end{align}
acts as a local storage function for proving passivity  in a neighborhood of a steady state $\bar x$ of \eqref{eq:consJRpHsys}, provided that the Hessian of $H$ evaluated at $\bar x$ (denoted as $\nabla^2 H(\bar x)$) is positive definite\footnote{Observe that $\nabla^2 H(x)=\nabla^2 \bar H(x)$ for all $x$.}.
\begin{myprop}\label{prop:incpass}
  Let $\bar u$ be a constant input and suppose there exists a corresponding steady state $\bar x$ to \eqref{eq:consJRpHsys} such that $\nabla^2H(\bar x)>0$.  Then the system \eqref{eq:consJRpHsys} is passive in a neighborhood of $\bar x$ with respect to the shifted external port-variables $\tilde u:=u-\bar u, \tilde y:=y-\bar y$ where $\bar y:=g^T\nabla H(\bar x)$.
\end{myprop}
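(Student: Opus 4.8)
The plan is to use the shifted Hamiltonian $\bar H$ from \eqref{eq:shiftedH} as a local storage function and to verify the dissipation inequality $\dot{\bar H}\le \tilde y^T\tilde u$ directly. First I would record the steady-state relation: since $\bar x$ is a steady state of \eqref{eq:consJRpHsys} for the constant input $\bar u$, we have $0=(J-R)\nabla H(\bar x)+g\bar u$, i.e. $g\bar u=-(J-R)\nabla H(\bar x)$. The key structural fact I would exploit is that $J-R$ is \emph{state-independent}, as emphasized in the text preceding the statement; this is precisely what makes the substitution above legitimate at a generic $x$ rather than only at $\bar x$.

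Next I would compute $\nabla\bar H(x)=\nabla H(x)-\nabla H(\bar x)$ and differentiate $\bar H$ along trajectories. Writing $gu=g\tilde u+g\bar u=g\tilde u-(J-R)\nabla H(\bar x)$ and combining with the dynamics, one obtains $\dot x=(J-R)\nabla\bar H(x)+g\tilde u$. Hence $\dot{\bar H}=\nabla\bar H(x)^T\dot x=\nabla\bar H(x)^T(J-R)\nabla\bar H(x)+\nabla\bar H(x)^T g\tilde u$. The quadratic term simplifies because $J$ is antisymmetric, so its contribution vanishes, while $R=R^T\ge 0$ gives $\nabla\bar H(x)^T(J-R)\nabla\bar H(x)=-\nabla\bar H(x)^T R\,\nabla\bar H(x)\le 0$. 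For the input term I would identify $g^T\nabla\bar H(x)=g^T(\nabla H(x)-\nabla H(\bar x))=y-\bar y=\tilde y$, so that the cross term equals exactly $\tilde y^T\tilde u$. This yields $\dot{\bar H}=-\nabla\bar H(x)^T R\,\nabla\bar H(x)+\tilde y^T\tilde u\le \tilde y^T\tilde u$, the desired passivity inequality.

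Finally I would argue that $\bar H$ qualifies as a bona fide storage function near $\bar x$. By construction $\bar H(\bar x)=0$ and $\nabla\bar H(\bar x)=0$, and since $\nabla^2 H(\bar x)=\nabla^2\bar H(\bar x)>0$ by hypothesis, continuity of the Hessian yields a neighborhood of $\bar x$ on which $\bar H$ is strictly convex, hence positive definite with $\bar x$ as its unique local minimizer. On that neighborhood $\bar H\ge 0$, and together with the dissipation inequality this establishes passivity with respect to $(\tilde u,\tilde y)$.

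I do not anticipate a genuinely hard step: the claim follows from a direct shifted-Hamiltonian computation. The only two points requiring care are the state-independence of $J-R$, without which the replacement $g\bar u=-(J-R)\nabla H(\bar x)$ could not be folded into the dynamics at a generic $x$, and the local positive definiteness of $\bar H$, which is exactly why the conclusion is stated only in a neighborhood of $\bar x$ and which follows from the Hessian assumption by continuity.
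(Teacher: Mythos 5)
Your proposal is correct and follows essentially the same route as the paper: both rewrite the dynamics in terms of the shifted Hamiltonian $\bar H$ using the state-independence of $J-R$ and the steady-state relation, identify $\tilde y=g^T\nabla\bar H(x)$, and invoke $\nabla^2H(\bar x)>0$ for local positive definiteness of the storage function. You merely make explicit the dissipation inequality $\dot{\bar H}=-\nabla\bar H^TR\,\nabla\bar H+\tilde y^T\tilde u\le\tilde y^T\tilde u$ that the paper leaves implicit.
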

\begin{proof}
  Define the shifted Hamiltonian by \eqref{eq:shiftedH}, then we obtain 
  \begin{equation}
  \begin{aligned}
    \dot x&=(J-R)\nabla H(x)+gu\\
    &=(J-R)(\nabla \bar H(x)+\nabla H(\bar x))+gu\\
&=(J-R)\nabla \bar H(x)+g(u-\bar u)\\
&=(J-R)\nabla \bar H(x)+g\tilde u\\
\tilde y&=y-\bar y=g^T(\nabla H(x)-\nabla H(\bar x))=g^T\nabla \bar H(x).
  \end{aligned}\label{eq:sysshham}
\end{equation}
As $\nabla^2H(\bar x)>0$ we have that $\bar H(\bar x)=0$ and $\bar H( x)>0$ for all $x\neq \bar x$ in a sufficiently small neighborhood around $\bar x$. Hence, by \eqref{eq:sysshham} the passivity property automatically follows where $\bar H$ acts as a local storage function.
\end{proof}

\section{MINIMIZING GENERATION COSTS}
\label{sec:minim-gener-costs}
The objective is to minimize the total quadratic generation cost  while achieving zero frequency deviation.  By analyzing the steady states of \eqref{eq:multimach5}, it follows that a necessary condition for zero frequency deviation is $\1^TP_{m} = \1^TP_d$, i.e., the total supply must match the total demand. 
Therefore, consider the following convex minimization problem: 
\begin{equation}
\begin{aligned}
  \min_{P_{m}} \ &\frac12P_m^TQP_m\\
  \text{s.t. } & \1^TP_m=\1^TP_d,
\end{aligned}\label{eq:quadmin}
\end{equation}
where  $Q=Q^T>0$ and $P_d$ is a constant \emph{unknown} power load.
\begin{myrem}
	Note that that minimization problem \eqref{eq:quadmin} is easily extended to quadratic cost functions of the form $\frac12P_m^TQP_m+b^TP_m$ for some $b\in\mathbb R^n$. Due to space limitations, this extension is omitted. 
\end{myrem}

As the minimization problem \eqref{eq:quadmin} is convex, it follows that $P_m$ is an optimal solution if and only if the Karush-Kuhn-Tucker conditions are satisfied \cite{convexopt}. Hence, the optimal points of \eqref{eq:quadmin} are characterized by 
\begin{equation}\label{eq:optcond}
\begin{aligned}
P_m^*&=Q^{-1}\1\lambda^*, \qquad 
\lambda^*=\frac{\1^TP_d}{\1^TQ^{-1}\1}
\end{aligned}
\end{equation}
Next, based on the design of \cite{trip2016internal}, consider a distributed controller of the form 
\begin{equation}
\begin{aligned}
  T\dot \theta&=-L_c\theta-Q^{-1}\w\\
  P_m&= Q^{-1}\theta-K\w
\end{aligned}\label{eq:optcontroller}
\end{equation}
where  $T=\diag_{i\in\V}\{T_i\}>0, K=\diag_{i\in\V}\{k_i\}>0$ are  controller parameters. In addition, $L_c$ is the  Laplacian matrix of some connected undirected weighted communication graph.  

The controller  \eqref{eq:optcontroller} consists of three parts. Firstly, the term $-K\w$ corresponds to a primary controller and adds damping into the system. The term $-Q^{-1}\w$ corresponds to secondary control for guaranteeing zero frequency deviation on longer time-scales. Finally, the term $-L_c\theta$ corresponds to tertiary control for achieving optimal production allocation over the network. 

Note that \eqref{eq:optcontroller}  admits the \pH\ representation 
\begin{equation}
\begin{aligned}
  \dot \vartheta&=-L_c\nabla H_c-Q^{-1}\w\\
  P_m&=Q^{-1}\nabla H_c-K\w, \qquad H_c=\frac12\vartheta^TT^{-1}\vartheta,
\end{aligned}\label{eq:contph}
\end{equation}
where $\vartheta:=T\theta$. By interconnecting the controller \eqref{eq:contph}  with \eqref{eq:6multiphall2}, the closed-loop system amounts to
\begin{equation}
\begin{aligned}
  \begin{bmatrix}
    \dot x_p\\
    \dot \vartheta
  \end{bmatrix}
&=
  \begin{bmatrix}
    J-R-R_K&G^T\\
    -G&-L_c
  \end{bmatrix}\nabla H-
  \begin{bmatrix}
    g\\
    0
  \end{bmatrix}
    P_d
\\
G&=
  \begin{bmatrix}
    Q^{-1}&0&0&0&0&0
  \end{bmatrix}\\
\end{aligned}\label{eq:clsys}
\end{equation}
where $J-R$ is given as in \eqref{eq:6multiphall2}, $H:=H_p+H_c$,  and $R_K=\blockdiag(0,K,0,0,0,0)$. Define the set of steady states of \eqref{eq:clsys} by $\Omega$ and observe that any $x:=(x_p,\vartheta)\in\Omega$ satisfies the optimality conditions \eqref{eq:optcond} and $\w=0$. 
\begin{myass}\label{ass:Hessian}
  $\Omega\neq \emptyset$ and there exists  $\bar x\in\Omega$ such that $\nabla^2H(\bar x)>0$. 
\end{myass}
\begin{myrem}
  While the Hessian condition of Assumption \ref{ass:Hessian} is required for proving local asymptotic stability of \eqref{eq:clsys}, guaranteeing that this condition holds can be bothersome. 
However, while we omit the details, it can be shown that $\nabla^2H(\bar x)>0$ if
  \begin{itemize}
  \item the generator reactances are small compared to the transmission line reactances.
  \item the subtransient voltage differences are small.
  \item the rotor angle differences are small.
  \end{itemize}
\end{myrem}
\begin{mythm}\label{thm:main}
  Suppose  $P_d$ is constant and  there exists $\bar x\in\Omega$ such that Assumption \ref{ass:Hessian} is satisfied. Then the trajectories of the closed-loop system \eqref{eq:clsys} initialized in a sufficiently small neighborhood around $\bar x$ converge to the set of optimal points  $\Omega$. 
\end{mythm}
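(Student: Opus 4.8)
The plan is to make the shifted Hamiltonian \eqref{eq:shiftedH} — now built around the \emph{full} closed-loop state $x=(x_p,\vartheta)$ and the equilibrium $\bar x\in\Omega$ furnished by Assumption \ref{ass:Hessian} — into a local Lyapunov function and then invoke LaSalle's invariance principle. First I would observe that the closed-loop matrix in \eqref{eq:clsys} splits as $\mathcal J-\mathcal R$ with $\mathcal J=-\mathcal J^T$ (the skew part, consisting of $J$ together with the off-diagonal coupling $G^T,-G$) and $\mathcal R=\mathcal R^T=\blockdiag(R+R_K,L_c)\succeq 0$; here the electrical block of $R$ is positive definite by condition \eqref{eq:neccTdq}, the frequency-damping gain $K>0$ makes the momentum block positive definite, and $L_c\succeq 0$ because it is a graph Laplacian. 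Exactly as in the proof of Proposition \ref{prop:incpass}, since $\bar x$ is a steady state the constant forcing $[g;0]P_d$ cancels the drift $(\mathcal J-\mathcal R)\nabla H(\bar x)$, so the dynamics may be written in terms of the shifted gradient as $\dot x=(\mathcal J-\mathcal R)\nabla\bar H(x)$.

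Next I would check that $\bar H$ is a legitimate local Lyapunov function: since $\nabla^2H(\bar x)=\nabla^2\bar H(\bar x)\succ 0$ and $\nabla\bar H(\bar x)=0$, the point $\bar x$ is a strict local minimum with $\bar H(\bar x)=0$ and $\bar H(x)>0$ nearby, so a small sublevel set $\{\bar H\le\varepsilon\}$ is compact. Differentiating along trajectories and using skew-symmetry of $\mathcal J$ gives $\dot{\bar H}=-(\nabla\bar H)^T\mathcal R\,\nabla\bar H\le 0$, which simultaneously shows that this sublevel set is forward invariant and sets up LaSalle: every trajectory started inside it converges to the largest invariant set $M\subseteq\{\dot{\bar H}=0\}$.

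The crux is to characterize $M$ and show $M\subseteq\Omega$. On $\{\dot{\bar H}=0\}$ we have $\mathcal R\,\nabla\bar H=0$; positive definiteness of the electrical block forces $\nabla_{E_q'}\bar H=\nabla_{E_d'}\bar H=\nabla_{E_q''}\bar H=\nabla_{E_d''}\bar H=0$, the frequency damping forces $\w=\nabla_p\bar H=0$, while the Laplacian only yields $\nabla_\vartheta\bar H\in\ker L_c=\operatorname{span}\{\1\}$, say $\nabla_\vartheta\bar H=c\1$. These identities then give $\dot\eta=D^T\w=0$ and vanishing voltage derivatives, leaving only the scalar $c$ undetermined. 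To eliminate it I would feed $\w\equiv 0$ back into the momentum balance: on $M$ we have $\dot p=0$, and the only surviving terms there are $-D\nabla_\eta\bar H+Q^{-1}\nabla_\vartheta\bar H$, so left-multiplying by $\1^T$ and using $\1^TD=0$ (equivalently the energy-conservation law $\1^TP_e=0$ of Remark \ref{rem:energyconservation}) gives $c\,\1^TQ^{-1}\1=0$, hence $c=0$. Therefore $\nabla_\vartheta\bar H=0$, so $\vartheta=\bar\vartheta$, and one checks directly that all components of $\dot x$ vanish on $M$; thus $M$ consists of equilibria and $M\subseteq\Omega$. Convergence of the trajectories to $\Omega$ follows from LaSalle, and the defining property of $\Omega$ (every point satisfies \eqref{eq:optcond} with $\w=0$) certifies that these limit points are optimal.

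I expect the main obstacle to be this last step — collapsing the $\ker L_c$ degree of freedom in the invariant-set analysis. The zero-dissipation conditions by themselves leave the consensus variable free, and it is only by reinjecting $\w\equiv0$ into the \emph{conservative} momentum equation and exploiting $\1^TD=0$ that one pins down $c=0$ and upgrades $M$ from a bare ``zero-dissipation'' set to a set of optimal equilibria. A secondary technical point deserving care is the coordinate $\eta=D^T\delta$: the line energy is only locally (not globally) convex in the angles, so I would restrict the analysis to the invariant subspace $\eta\in\operatorname{im}(D^T)$ and to a sufficiently small neighborhood of $\bar x$, which is precisely what confines the theorem to a local statement.
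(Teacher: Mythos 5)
Your proposal is correct and follows essentially the same route as the paper: use the shifted Hamiltonian $\bar H$ as a local Lyapunov function, compute $\dot{\bar H}=-(\nabla\bar H)^T\blockdiag(R+R_K,L_c)\nabla\bar H\le 0$, and apply LaSalle on a compact forward-invariant sublevel set guaranteed by $\nabla^2H(\bar x)>0$. Your explicit elimination of the $\ker L_c$ constant $c$ via $\dot p=0$ and $\1^TD=0$ is a detail the paper's proof states only implicitly (it asserts $\theta=\1\lambda^*$ on the invariant set without spelling out the computation), so your write-up is, if anything, more complete on that point.
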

\begin{proof}
  Observe by \eqref{eq:sysshham} that the shifted Hamiltonian defined by \eqref{eq:shiftedH} satisfies
  \begin{align*}
    \dot{\bar H}&=-(\nabla \bar H)^T\blockdiag(R+R_K,L_c)\nabla \bar H\leq 0
  \end{align*}
where equality holds if and only if $\w=0,$ $T^{-1}\vartheta=\theta= \1\theta^*$ for some $\theta^*\in\mathbb R$, and $\nabla_E \bar H(x)=\nabla_EH(x)=0$. Here $\nabla_EH(x)$ is the gradient of $H$ with respect to the internal voltages $E_q',E_d',E_q'',E_d''$. By Assumption \ref{ass:Hessian} there exists a compact neighborhood $\Upsilon$ around $\bar x$ which is forward invariant.  By invoking LaSalle's invariance principle, trajectories initialized in $\Upsilon$ converge to the largest invariant set where $\dot{\bar H}=0$. On this set  $\w,\eta,\theta,E_q',E_d',E_q'',E_d''$ are constant  and, more specifically, $\w=0, \ \theta=\1\lambda^*=\1\frac{\1^TP_d}{\1^TQ^{-1}\1}$   corresponds to an optimal point of \eqref{eq:quadmin} as $P_m=Q^{-1}\1\lambda^*$ where $\lambda^*$ is defined in \eqref{eq:optcond}. We conclude that the trajectories of the closed-loop system \eqref{eq:clsys} initialized in a sufficiently small neighborhood around $\bar x$ converge to the set of optimal points $\Omega$.
\end{proof}
\begin{myrem}
  While by Theorem \ref{thm:main} the trajectories of the closed-loop system \eqref{eq:clsys} converge to the set of optimal points, it may not necessarily converge to a \emph{unique} steady state as the closed-loop system \eqref{eq:clsys} may have  multiple (isolated) steady states. 
\end{myrem}

\section{CONCLUSIONS}
\label{sec:conclusions}
We have shown that a much more advanced multi-machine model than conventionally used can be analyzed using the port-Hamiltonian framework. Based on the energy functions of the system,  a \pH\ representation of the model is obtained. Moreover, the system is proven to be incrementally passive which allows the use of a passive controller that regulates the frequency in an optimal manner, even in the presence of an unknown constant demand. 

The results established in this paper can be extended in many possible ways. Current research has shown that the third, fourth and fifth order model as given in \cite{powsysdynwiley} admit a similar \pH\ structure as \eqref{eq:6multiphall2}. It is expected that the same controller as designed in this paper can also be used in these lower order models. 

While the focus in this paper is about (optimal) frequency regulation, further effort is required to investigate the possibilities of (optimal) voltage control using passive controllers. 
Another extension is to  include transmission line resistances of the network. Finally, one could look at the possibility to extend the results to the case where inverters and frequency dependent loads are included into the network as well.

\bibliographystyle{abbrv} 
\bibliography{mybib}

\clearpage

\end{document}